\definecolor{lred}{RGB}{226,104,104}
\definecolor{lgrey}{RGB}{157,187,215}
\definecolor{lpurple}{RGB}{176,156,255}
\newcommand{\Addresses}{{
  \bigskip
  \footnotesize

  Beibei~Liu, \textsc{Department of Mathematics, The Ohio State University}\par\nopagebreak 
  \textsc{100 Math Tower, 231 W 18th Ave, Columbus, OH 43201}\par\nopagebreak
  \textit{E-mail address}: \texttt{liu.11302@osu.edu}

  \medskip

  Lisa~Piccirillo, \textsc{Department of Mathematics, MIT and The University of Texas at Austin}\par\nopagebreak 
  \textsc{PMA 8.100, 2515 Speedway, Austin, TX 78712}\par\nopagebreak
  \textit{E-mail address}: \texttt{piccirli@mit.edu}

}}
\numberwithin{equation}{section}
\theoremstyle{plain}
\newtheorem{theorem}{Theorem}[section]
\newtheorem{lemma}[theorem]{Lemma}
\newtheorem{corollary}[theorem]{Corollary}
\theoremstyle{definition}
\newtheorem{definition}[theorem]{Definition}
\def\s{\mathfrak{s}}
\title{Bounding the Dehn surgery number by 10/8}
\author{Beibei Liu \and Lisa Piccirillo}
\begin{document}
\maketitle
\begin{abstract}
We provide new examples of 3-manifolds with weight one fundamental group and the same integral homology as the lens space $L(2k,1)$ 
which are not surgery on any knot in the three sphere. Our argument uses Furuta's 10/8-theorem, and is simple and combinatorial to apply.

\end{abstract}


\section{Introduction}

Every closed, oriented 3-manifold is obtained by a Dehn surgery on a link in the three-sphere \cite{Lic, Wall}. This suggests a natural way to quantify the complexity of a 3-manifold; the \textit{Dehn surgery number} of a 3-manifold $M$ is the minimal number of components of a link in $S^3$ which admits a Dehn surgery to $M$. This is a deceptively simple invariant; other than a handful of classical lower bounds coming from the fundamental group or homology or the number of reducing spheres, it is notoriously difficult to give lower bounds. In what follows, we will assume all 3-manifolds are irreducible, closed, oriented, and have the integral homology of some lens space $L(r,1)$ (including $r\in \{0,1\}$). To date there are no examples in the literature of such 3-manifolds with surgery number greater than two.\footnote{Daemi-Miller-Eismeier announced examples in 2022, their preprint is forthcoming.}
Even showing that there are 3-manifolds with Dehn surgery number at least two is a question with a rich history.

In 1990, Boyer and Lines \cite{BL} used the Casson invariant to construct homology lens spaces with Dehn surgery number at least two. 
In 1996 Auckly \cite{Auck} gave homology spheres with Dehn surgery number at least two; his proof relies on Taubes' end-periodic diagonalization theorem \cite{Taube} and he gives both toroidal and hyperbolic examples. In 2016, Hom, Karakurt and Lidman \cite{HKL} constructed Seifert fibered manifolds with Dehn surgery number two using Heegaard Floer homology, and Hom and Lidman later gave more hyperbolic examples \cite{HL}. 
In 2019, Hedden, Kim, Mark, and Park \cite{HKMP} constructed homology $S^1\times S^2$s with Dehn surgery number at least two; their examples can be taken to be either Seifert fibred or hyperbolic and their obstruction uses the Rohlin invariant or Heegaard Floer homology respectively. 
In 2022 Sivek and Zentner used the SU(2) character variety of fundamental groups to provide homology lens spaces with Dehn surgery number 2 \cite{sivekzentner}.
%

In this note, we give a new argument for showing that the Dehn surgery number is at least two using Furuta's 10/8-theorem \cite{Furuta}. We do not demonstrate any fundamentally new topological phenomena. The contribution is our argument,  which is simple and combinatorial to apply.  Applications include re-proofs that for all $k\in\mathbb{Z}$ there are 3-manifolds with the homology of $L(2k,1)$ which are not surgery on any knot in the three-sphere. Let $L_n$ be the 2-component link shown in Figure \ref{fig:Ln}; here $n$ is an odd integer and both components $K_1, K_2$ are torus knot $T(n, n+1)$. 

\begin{theorem}
\label{thm:evenknot}
For any odd integers $p,q$ and odd integer $n$ sufficiently large, the surgery manifold $S^3_{p,q}(L_n)$ is not a surgery on a knot in $S^3$. 
\end{theorem}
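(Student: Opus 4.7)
The plan is a proof by contradiction. Suppose $Y := S^3_{p,q}(L_n) = S^3_r(K)$ for some knot $K\subset S^3$ and integer $r$.

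First I would extract $r$ and sort out parities. Comparing the two surgery descriptions, $|H_1(Y;\mathbb{Z})| = |pq-\ell^2| = |r|$, where $\ell = \mathrm{lk}(K_1,K_2)$. The hypothesis that $Y$ has the homology of $L(2k,1)$, combined with $p,q$ odd, forces $\ell$ to be odd and $r$ to be even. The knot trace $X_r(K)$ is therefore a spin $4$-manifold with intersection form $(r)$ and signature $\mathrm{sgn}(r)$.

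Next I would form the candidate closed $4$-manifold $Z := W_n \cup_Y (-X_r(K))$, where $W_n$ is the trace of the $(p,q)$-surgery on $L_n$, with $\partial W_n=Y$ and intersection form $Q_{W_n}=\bigl(\begin{smallmatrix}p&\ell\\ \ell&q\end{smallmatrix}\bigr)$. By Mayer--Vietoris, $Z$ is simply connected with $b_2(Z)=3$, and its intersection form is an integral unimodular overlattice (of index $|r|$) of $Q_{W_n}\oplus(-r)$. Since $p,q$ are odd, $W_n$ is not spin, so $Z$ is not spin either; Furuta's theorem cannot be applied to $Z$ directly. I would repair this by passing to a spin $4$-manifold extracted from $Z$ --- for instance, the double branched cover $\widetilde Z\to Z$ over a characteristic surface, or, alternatively, by first replacing $W_n$ with a spin filling $V$ of $Y$ constructed by Kirby-diagrammatic moves (pushing in the Seifert surfaces of $K_1,K_2=T(n,n+1)$ and attaching spin-restoring handles), then setting $Z':=V\cup_Y (-X_r(K))$, which is spin and has $b_2(Z') = b_2(V)+1$, $\sigma(Z') = \sigma(V)-\mathrm{sgn}(r)$.

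Third, I would apply Furuta's $10/8$-theorem: for the resulting closed spin manifold $Z'$,
\[
b_2(Z') \;\geq\; \tfrac{10}{8}\,|\sigma(Z')| + 2.
\]
The key computation is $\sigma(Z')$ (or $\sigma(\widetilde Z)$), which is read off from the Seifert pairing of $T(n,n+1)$ through the Hirzebruch--Rokhlin--Guillou--Marin signature formula for branched covers, together with the known closed-form expression for $\sigma(T(n,n+1))$. For $n$ sufficiently large, I expect the inequality to fail, giving the contradiction. The ``combinatorial simplicity'' advertised in the introduction stems from reading $\sigma, b_2, \ell$, and the relevant Arf/Seifert data directly off the Kirby diagram of $L_n$.

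The hard part will be producing the spin modification with a sufficiently sharp signature-to-$b_2$ ratio. Naive Milnor-fibre type fillings built from Seifert surfaces of $T(n,n+1)$ have $|\sigma|/b_2$ asymptotically near $2/3$, which is below the Furuta threshold $4/5$. So some genuine refinement will be required --- likely exploiting both components of $L_n$ symmetrically, choosing a characteristic surface with a favorable Arf contribution, or iterating the branched-cover construction --- so that the ratio crosses the $4/5$ threshold for $n$ large. That refinement, rather than the 10/8 input itself, is where I expect the real work to be.
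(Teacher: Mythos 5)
Your overall strategy---cap the putative knot trace with a spin filling of $Y$ built by Kirby-diagrammatic moves and apply Furuta's theorem to the resulting closed spin manifold---is the paper's strategy, but two essential pieces are missing. The first is your assumption that the surgery coefficient $r$ is an integer. A Dehn surgery on a knot may have any rational slope $s/t$ with $|s|=|H_1(Y)|$, and for $t>1$ there is no knot trace with boundary $Y$ to glue to. The paper handles this with Moser's cabling identity $S^3_{s/t}(K)\,\#\, L(t,s)=S^3_{st}(K_{t,s})$: since $s=\pm(pq-1)$ is even (the two components of $L_n$ have linking number $1$), the trace $X_{st}(K_{t,s})$ is spin, and Lemma 2.2 of the paper supplies a spin filling of $L(t,s)$ with $b_2$ and $|\sigma|$ bounded by $|s|$, a correction term independent of $n$ that can be absorbed by boundary connected sum. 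Without some such device your argument excludes only integral surgeries, which is strictly weaker than the theorem.

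The second, more serious, gap is that the construction you defer as ``where the real work is'' is the entire content of the proof; you correctly diagnose that Milnor-fibre-type fillings give $|\sigma|/b_2$ near $2/3$, below the $4/5$ threshold, but you do not supply the refinement, and neither of your proposed fixes (branched covers, Guillou--Marin) is developed far enough to check. The paper's solution is the characteristic-sublink move of its Figure 2: a single blow-up across the $n$ parallel strands of the $1$-framed $T(n,n+1)$ component (legal because $n$ is odd) converts that component into a $(1-n^2)$-framed unknot at the cost of one $-1$-framed meridian lying \emph{outside} the characteristic sublink; then $n^2-2$ positive blow-ups and one negative blow-down empty the characteristic sublink, producing a spin filling with $b_2=n^2$ and $|\sigma|=n^2-1$ in the $k=0$ case, so that $|\sigma|/b_2\to 1$ and the $10/8$ inequality fails for $n\geq 2$. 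You would also need to address which spin structure your filling induces on $Y$ (it must match the one coming from the trace); the paper does this by building fillings for both spin structures, exploiting the symmetry of $L_n$ exchanging $K_1$ and $K_2$. As written, your proposal is a plausible outline of the paper's argument with its two decisive steps left open.
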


The proof gives explicit lower bounds on $n$. For $n\ge 2$, $S^3_{p,q}(L_n)$ is an irreducible graph manifold with weight 1 fundamental group. 
We note that our examples are similar in spirit to those in \cite{HKMP, sivekzentner}, which are also splices of torus knot complements. 

One can choose odd integers $p$ and $q$ such that any even integer $2k$ arises as the order of $H_1(S^3_{p, q}(L_n))$, for example by choosing $p=2k+1, q=1$.  This yields the following corollary.

\begin{corollary}
\label{coro:even}
For any integer $k$ and sufficently large odd integer $n$, the homology lens space $L(2k,1)$ given by $S^{3}_{2k+1, 1}(L_n)$ is not a surgery on a knot in $S^3$.
\end{corollary}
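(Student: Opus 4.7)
The plan is to apply Theorem~\ref{thm:evenknot} directly. With the choice $p = 2k+1$ and $q = 1$, both surgery coefficients are odd integers, so the hypotheses of the theorem are satisfied, and for any sufficiently large odd $n$ the manifold $S^{3}_{2k+1,1}(L_n)$ is not a Dehn surgery on any knot in $S^3$. This is the only piece of real content being used; what remains is a homology bookkeeping step.

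It therefore suffices to verify that $S^{3}_{2k+1,1}(L_n)$ has the integral homology of $L(2k,1)$, i.e.\ that its first homology is cyclic of order $2k$. For a Dehn surgery on a $2$-component link, $H_1$ is presented by the linking matrix (surgery coefficients on the diagonal, pairwise linking numbers off the diagonal). From Figure~\ref{fig:Ln} one reads off $\mathrm{lk}(K_1,K_2) = \pm 1$, so the presentation matrix is
$$\begin{pmatrix} 2k+1 & \pm 1 \\ \pm 1 & 1 \end{pmatrix},$$
which contains a unit entry and has determinant $2k$. Its Smith normal form is therefore $\mathrm{diag}(1,2k)$, giving $H_1(S^{3}_{2k+1,1}(L_n);\Z) \cong \Z/2k\Z$. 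Combined with Poincar\'e duality (which forces $H_2 \cong \mathrm{Hom}(H_1,\Z) = 0$), this matches the homology of $L(2k,1)$.

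There is no essential obstacle: the corollary is a direct consequence of Theorem~\ref{thm:evenknot} together with a routine linking matrix computation. The only thing that must be checked from the picture rather than computed symbolically is the linking number of the two components of $L_n$, which is immediate from Figure~\ref{fig:Ln}.
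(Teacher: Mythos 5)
Your proposal is correct and matches the paper's reasoning: the paper also obtains the corollary by specializing Theorem~\ref{thm:evenknot} to $p=2k+1$, $q=1$ and noting that the resulting first homology has order $pq-1=2k$. Your linking-matrix verification (using $\mathrm{lk}(K_1,K_2)=\pm 1$) just makes explicit the homology bookkeeping that the paper leaves implicit.
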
  


\begin{figure}[H]
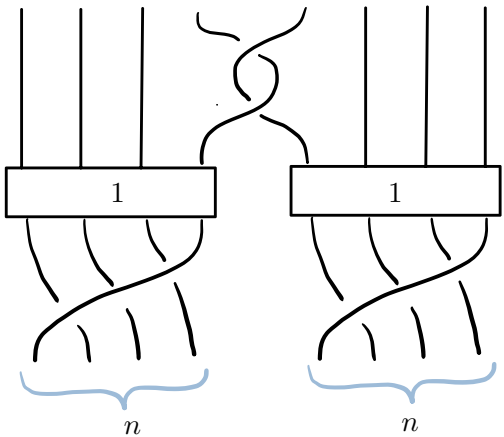

\centering
\begin{overpic}[width=.4\textwidth, tics=20]
  {figforpaper}
   \put(45, -9){$n$}
      \put(150, -8){$n$}
   \put(145, 79){$1$}
      \put(40, 79){$1$}
  \end{overpic}
\caption{The link $L_n$ is obtained by braid closing the top and bottom of this figure.}
\label{fig:Ln}
\end{figure}



The arguments in this paper are somewhat robust under connected sums, for example 
one can extend  Corollary \ref{coro:even} to show that for any odd integers $p, q$, the connected sum $\#^m S^3_{p, q}(L_n)$ is not a surgery on any $m$-component link in the three-sphere for sufficiently large odd $n$. 






\section*{Acknowledgements} The first author is partially supported by the NSF grant DMS-2203237. The second author is supported in part by a Sloan Fellowship, a Clay Fellowship, and the Simons collaboration ``New structures in low-dimensional topology''.
We are grateful to Anthony Conway, Jen Hom, and Tye Lidman for helpful conversations. 

\section{Preliminaries}

Our argument for bounding the Dehn surgery number of a 3-manifold $M$ will require obstructing the existence of certain spin fillings of $M$.  See \cite[Section 1.4.2]{GomS} for discussion of spin structures.   To obstruct $M$ from having a spin filling $X$ with a certain Euler characteristic and signature, we can assume such an  $X$ exists and glue it to some known spin filling $X'$ of $-M$, then look for a contradiction with Furuta's $10/8$-theorem \cite{Furuta} applied to the resulting closed spin 4-manifold.

\begin{theorem}\cite[Theorem 1]{Furuta}
\label{thm:10/8}
Let $X$ be a closed, spin, smooth 4-manifold with $b_2(X)\neq 0$ and indefinite intersection form. Then 
$$4b_2(X)\geq 5|\sigma(X)|+8.$$
where $b_2(X)$ is the second Betti number of $X$ and $\sigma(X)$ is the signature of $X$. 
\end{theorem}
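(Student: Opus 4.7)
The plan is to apply Bauer--Furuta's finite-dimensional reduction of the Seiberg--Witten equations, exploit the enhanced $\mathrm{Pin}(2)$-symmetry these equations enjoy on a spin 4-manifold, and then translate the resulting equivariant-homotopy data into the stated numerical inequality via equivariant $KO$-theory.

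First I would set up the Seiberg--Witten equations with their full symmetry group. Because $X$ is spin, the positive spinor bundle $W^+$ carries a quaternionic structure $j$ anticommuting with complex scalar multiplication; together with the usual $U(1)$ scalar action, this generates a $\mathrm{Pin}(2) \subset SU(2)$ that commutes with the Dirac operator and preserves the SW equations on the configuration space (after a standard global gauge fixing). Write $\mathbb{H}$ for the quaternions viewed as a $\mathrm{Pin}(2)$-representation (with $U(1)$ acting by left multiplication and $j$ by right multiplication), and $\widetilde{\mathbb{R}}$ for the real line on which $U(1)$ acts trivially and $j$ acts by $-1$; these are the two irreducible representations that will appear in the reduction.

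Second, I would carry out the Bauer--Furuta finite-dimensional approximation. Via spectral projection, the monopole map descends to a proper $\mathrm{Pin}(2)$-equivariant map between finite-dimensional representations and, after one-point compactification, to a pointed $\mathrm{Pin}(2)$-equivariant map
\[
\mu : S^{a\mathbb{H} + b\widetilde{\mathbb{R}}} \longrightarrow S^{c\mathbb{H} + d\widetilde{\mathbb{R}}}.
\]
The Atiyah--Singer index theorem for the quaternionic Dirac operator and the usual cohomological computation for $d^* \oplus d^+$ give
\[
a - c \;=\; \mathrm{ind}_{\mathbb{H}} D^+ \;=\; -\sigma(X)/16, \qquad d - b \;=\; b^+(X).
\]
The $U(1)$-fixed locus of $\mu$ is precisely the reducible stratum, supported on the $\widetilde{\mathbb{R}}$-summands.

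Third, I would invoke Furuta's central $K$-theoretic input. Reversing orientation if needed, assume $\sigma(X) \le 0$; indefiniteness forces $b^+(X) \ge 1$, so the $U(1)$-fixed restriction $\mu^{U(1)} : S^{b\widetilde{\mathbb{R}}} \to S^{d\widetilde{\mathbb{R}}}$ is an essential $\mathbb{Z}/2$-equivariant map. Computing in $\widetilde{KO}_{\mathrm{Pin}(2)}^{0}$ of these representation spheres, using the Bott class of $\mathbb{H}$ and the multiplicative structure of $KO_{\mathrm{Pin}(2)}(\mathrm{pt})$, one derives
\[
d - b \;\ge\; 2(a - c) + 1, \qquad \text{equivalently} \qquad b^+(X) \;\ge\; -\sigma(X)/8 + 1.
\]
Finally I would rearrange: writing $|\sigma| = -\sigma(X)$ and using $b_-(X) = b^+(X) + |\sigma|$, so that $b_2(X) = 2 b^+(X) + |\sigma|$, one computes
\[
b_2(X) \;\ge\; 2\!\left(\tfrac{|\sigma|}{8} + 1\right) + |\sigma| \;=\; \tfrac{5\,|\sigma|}{4} + 2,
\]
which is exactly $4 b_2(X) \ge 5\,|\sigma(X)| + 8$; when $\sigma(X) = 0$ the same inequality reduces to $b_2(X) \ge 2$, which already follows from indefiniteness.

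The main obstacle is the equivariant $KO$-theory step producing $b^+ \ge -\sigma/8 + 1$ rather than the weaker $b^+ \ge -\sigma/16 + 1$: a naive Euler-class count in $H^*$ or in $K_{U(1)}$ would only yield $b_2 \ge \tfrac{9}{8}|\sigma| + 2$ and would miss the $10/8$ constant altogether. Recovering the sharp factor $2$ requires genuinely passing to $\mathrm{Pin}(2)$-equivariant real $K$-theory and exploiting how the Bott class of $\mathbb{H}$ interacts with the sign representation $\widetilde{\mathbb{R}}$ under the $\mathrm{Pin}(2)/U(1) = \mathbb{Z}/2$ action, together with a transfer comparison to the $U(1)$-subgroup. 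This is exactly the heart of Furuta's original argument, and it is where essentially all of the work of the proof lies.
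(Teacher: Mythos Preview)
The paper does not prove this statement at all: Theorem~\ref{thm:10/8} is simply quoted from Furuta's paper \cite{Furuta} and used as a black box throughout Section~\ref{sec:proofs}. There is therefore no ``paper's own proof'' to compare your attempt against.

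That said, your sketch is a faithful outline of Furuta's original argument: the $\mathrm{Pin}(2)$-symmetry of the Seiberg--Witten map on a spin manifold, the Bauer--Furuta finite-dimensional approximation producing an equivariant map of representation spheres, the index identifications $a-c=-\sigma/16$ and $d-b=b^+$, and the decisive step in $\mathrm{Pin}(2)$-equivariant $KO$-theory yielding $b^+\ge -\sigma/8+1$. Your final rearrangement is also correct. You are right to flag the $KO$-theoretic step as the crux; everything else is essentially bookkeeping, and the factor-of-two gain over what ordinary (or merely $U(1)$-equivariant) cohomology would give is exactly what distinguishes the $10/8$ bound from weaker ones. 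For the purposes of this paper, however, none of this is needed---the authors only invoke the inequality, not its proof.
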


Obstructing spin fillings of $M$ in this way is a classical argument,  and we were particularly inspired here by the work of Donald-Vafaee \cite{Dvafa}. For an argument like this to be practical, one needs to be able to construct spin fillings to use as $X'$, i.e. honest spin fillings of $-M$ with various Euler characteristics and signatures.  Towards this,  we recall the so-called characteristic sublink calculus. 

We will focus on extending spin structures on 3-manifolds over compact 4-manifolds built out of 0- and 2-handles. Call such 4-manifolds \textit{2-handlebodies.} Suppose that $M$ is the boundary of a 2-handlebody $X$ which is described by attaching 2-handles to a framed link $L\subset S^3$. Such 2-handlebody $X$ is also called the \emph{trace} of the framed link.   Define characteristic sublinks of $L$ as follows:

\begin{definition}
Given a framed link $L=K_1\cup K_2\cup\cdots\cup K_n$ in $S^3$, a characteristic sublink $L'\subset L$ is a sublink such that for each $K_i$ in $L$, the framing $lk(K_i, K_i)$ is congruent mod $2$ to $lk(L', K_i)$.
\end{definition}

By work of Kaplan \cite{Kaplan}, there is a bijection between the spin structures on the 3-manifold $M$ and characteristic sublinks of $L$, see the discussion in \cite[Section 5.7]{GomS}, in particular \cite[Proposition 5.7.11]{GomS}.
This perspective on spin structures is particularly convenient for us because of the following: if a spin 3-manifold $(M, \s)$, described as integral surgery on a link in $S^3$, has that the empty sublink is characteristic then the 2-handlebody filling defined by the same framed link gives a spin filling of $(M,\s)$, see \cite[Theorem 5.7.14]{GomS}. 

Thus, if one is interested in finding a spin filling $X'$ of some spin structure $\mathfrak{s}$ on $M$, one can apply the following Kirby moves to modify a surgery diagram for $M$ until the empty link is a characteristic sublink. For reference, see \cite[Section 5.7]{GomS}, in particular, the discussion after Figure 5.47 in \cite{GomS}.
Notice that these moves change the characteristic sublink $L'$ and the diffeomorphism type of $X$, but preserve the diffeomorphism type of $M$. 

Given an integral surgery diagram for $M$ and some initial characteristic sublink $L'$, one can
\begin{enumerate} 
\item Add or remove an $\pm 1$-framed split unknot to $L'$ to get a new characteristic sublink $L''$. This corresponds to blow-up/downs on $X$. 
\item Given two knots $K_1, K_2 \subset L'$ one can replace $L'$ by a new characteristic sublink $L''=(L'\setminus (K_1\cup K_2))\cup (K_1\natural K_2)$, where $K_1\natural K_2$ denotes any band sum of $K_1$ and $K_2$. 
This corresponds to handle slides on $K_1$ and $K_2$, and preserves the diffeomorphism type of $X$.
\item Given knots $K_1 \subset L'$ and $K_2\subset L\setminus L'$ one can replace $L'$ by a new characteristic sublink $L''=(L'\setminus K_1)\cup ((K_1\natural K_2)\cup K_2)$, where $K_1\natural K_2$ denotes any band sum of $K_1$ and $K_2$. 
This also corresponds to handle slides on $K_1$ and $K_2$, and preserves the diffeomorphism type of $X$.
\end{enumerate}

\begin{figure}[H]
\centering
\begin{overpic}[width=.9\textwidth, tics=20]
  {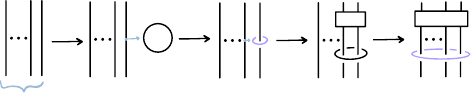}
   \put(20, -7){\color{lgrey}{$n$}}
      \put(40, 70){$f$}
            \put(119,70){$f$}
                        \put(238,70){$f-1$}
   \put(151, 34){$-1$}
      \put(238, 34){$\color{lpurple}{-1}$}
                              \put(331,70){$f-4$}
   \put(309, 62){$-1$}
      \put(328, 22){$-1$}
                                    \put(423,70){$f-n^2$}
   \put(389, 63){$-1$}
      \put(420, 22){$\color{lpurple}{-1}$}
  \end{overpic}
\caption{A convenient move in the characteristic sublink calculus.  Here $n$ must be odd. }
\label{fig:move}
\end{figure}

The characteristic sublink calculus sequence illustrated in Figure \ref{fig:move} for blowing up across an odd number of strands of $L'$ is a particularly convenient combination of these three moves. Handle slides are marked in grey. The black components are a part of the characteristic sublink and the purple components are not a part of the characteristic sublink.

\subsection{Some spin fillings of certain lens spaces}
For some cases of the proof of Theorem \ref{thm:evenknot}, we will require the following lemma about even surgery descriptions of lens spaces. This lemma is surely known to experts. 
\begin{lemma}\label{lem:evenchain}
For any nonzero even integer $s$ and integer $t$ coprime to $s$, the lens space $L(t, s)$ can be obtained by surgery along a linear chain of unknots $U_1, \cdots, U_k$ with even surgery coefficients and with $k\leq |s|$. 
\end{lemma}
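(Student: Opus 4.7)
The plan is to realize $L(t,s)$ by an even-quotient continued fraction algorithm. Recall the standard fact that a linear chain of unknots with integer surgery coefficients $a_1,\ldots,a_k$ realizes the lens space $L(p,q)$, where $p/q=[a_1,\ldots,a_k]:=a_1-1/(a_2-1/(\cdots-1/a_k))$ in the Hirzebruch--Jung convention (see \cite[Chapter~5]{GomS}). So it suffices to produce such an expansion $t/s=[a_1,\ldots,a_k]$ with every $a_i$ even and $k\leq|s|$.

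Since $\gcd(t,s)=1$ and $s$ is even, $t$ is odd. Starting from $(t_0,s_0):=(t,s)$, I would iteratively choose $a_i\in 2\Z$ so that $r_{i-1}:=a_i s_{i-1}-t_{i-1}$ lies in the half-open interval $(-|s_{i-1}|,|s_{i-1}|]$; this is always possible because $\{a s_{i-1}-t_{i-1}:a\in 2\Z\}$ is an arithmetic progression of common difference $2|s_{i-1}|$, hence meets any such interval in exactly one point. I then set $(t_i,s_i):=(s_{i-1},r_{i-1})$ and iterate, terminating at the first index $k-1$ with $|s_{k-1}|=1$, at which point the chain closes with $a_k:=t_{k-1}/s_{k-1}$.

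The heart of the argument is parity bookkeeping: by induction on $i$, the parities of $(t_i,s_i)$ alternate between \emph{(odd, even)} and \emph{(even, odd)}. In both cases $r_{i-1}$ has parity opposite to that of $s_{i-1}$, so the representative chosen in $(-|s_{i-1}|,|s_{i-1}|]$ cannot equal $\pm|s_{i-1}|$, which promotes $|r_{i-1}|\leq|s_{i-1}|$ to the strict inequality $|r_{i-1}|<|s_{i-1}|$. The same bookkeeping legalizes termination: $|s_{k-1}|=1$ forces $s_{k-1}$ odd, hence $k-1$ is odd, so $t_{k-1}=s_{k-2}$ sits in the even slot and $a_k=\pm t_{k-1}$ is even, as required. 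The length bound follows from the strict decrease $|s_i|<|s_{i-1}|$ combined with $|s_0|=|s|$ and $|s_{k-1}|=1$, giving $k\leq|s|$.

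I do not foresee a genuine obstacle; this is a clean Euclidean-algorithm-with-parity. The only place demanding care is the parity alternation, together with the observation that $|s_i|=1$ must occur first at an odd index (ensuring the final coefficient is even); both are engineered by the specific interval $(-|s_{i-1}|,|s_{i-1}|]$ used for $r_{i-1}$.
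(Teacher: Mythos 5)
Your proposal is correct and is essentially the paper's own argument: both run the even-quotient Hirzebruch--Jung continued fraction expansion of $t/s$, using the alternating parity of the remainders to force strict decrease (hence the bound $k\le|s|$) and to guarantee that the terminal coefficient is even. The paper phrases each step as writing the next fraction as an even integer plus a proper remainder, but this is the same algorithm with the same parity bookkeeping.
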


\begin{corollary}\label{coro:lens}
For any nonzero even integer $s$ and integer $t$ coprime to $s$, the lens space $L(t, s)$ bounds a spin 4-manifold with $b_2$ and $|\sigma|$ bounded above by $|s|$.
\end{corollary}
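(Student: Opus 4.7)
The plan is to deduce this corollary almost immediately from Lemma \ref{lem:evenchain} combined with the characteristic sublink perspective on spin fillings recalled earlier in the preliminaries.

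First I would apply Lemma \ref{lem:evenchain} to present $L(t,s)$ as integer surgery on a linear chain $U_1 \cup \cdots \cup U_k$ of unknots with all even framings and $k \leq |s|$. Let $X$ denote the trace of this framed link, that is, the 2-handlebody obtained by attaching 2-handles to $B^4$ along the $U_i$ with the specified framings. Then $\partial X = L(t,s)$, and $X$ has $b_2(X) = k \leq |s|$.

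Next I would verify that $X$ is spin by checking that the empty sublink is a characteristic sublink of $L = U_1 \cup \cdots \cup U_k$. For any component $U_i$, we have $lk(\emptyset, U_i) = 0$, and by construction each self-framing $lk(U_i, U_i)$ is even, hence congruent to $0$ mod $2$. Thus the empty link satisfies the definition of a characteristic sublink, so by \cite[Theorem 5.7.14]{GomS} (as recalled in the preliminaries) the trace $X$ carries a spin structure extending some spin structure on $L(t,s)$. In particular $L(t,s)$ bounds the spin 4-manifold $X$.

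Finally, the signature bound is free: since $X$ has no 1- or 3-handles, its intersection form is a rank $k$ symmetric form, so $|\sigma(X)| \leq b_2(X) = k \leq |s|$. There is no real obstacle in this argument; all of the content is packaged in Lemma \ref{lem:evenchain}, and the corollary is essentially the observation that even-framed surgery diagrams automatically give spin fillings via the trace, with the resulting Betti and signature bounds.
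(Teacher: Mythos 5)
Your argument is correct and is exactly the paper's proof: present $L(t,s)$ via Lemma \ref{lem:evenchain}, observe the empty sublink is characteristic because all framings are even, and take the trace as the spin filling. You simply spell out the verification of the characteristic condition and the bound $|\sigma(X)|\leq b_2(X)\leq |s|$, which the paper leaves implicit.
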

\begin{proof}
With the surgery presentation in Lemma \ref{lem:evenchain}, the empty link is characteristic. As such, the trace of the framed link from the surgery description of $L(t,s)$ in Lemma \ref{lem:evenchain} is a spin filling of $L(t,s)$. 
\end{proof}

\begin{proof}[Proof of Lemma \ref{lem:evenchain}]
The proof will be constructive. Begin by writing 
$$\dfrac{t}{s}=m+\dfrac{l}{s}$$
where $m$ is an even integer, $\gcd(l, s)=1$ (so in particular, $l$ is odd), and $0<|l|<|s|$. Then $L(t, s)$ can be regarded as a surgery on the Hopf link with surgery coefficients $m$ and $-\dfrac{s}{l}$, see for example \cite[Section 5.3]{GomS}. If $l=\pm 1$, $L(t,s)$ is a surgery on the Hopf link with even surgery coefficients $m, \mp s$, and we are done.   

Otherwise, we write
$$-\dfrac{s}{l}=m_1+\dfrac{l_1}{l}$$
where $m_1$ is even, $\gcd(l_1, l)=1$ and $0<|l_1|<|l|$. Note $l_1$ must be even since $s$ is even.
Now we can describe $L(t, s)$ as surgery on a linear chain of unknots $U_1, U_2, U_3$ with surgery coefficients $m, m_1, -\dfrac{l}{l_1}$. Note that since $l_1$ is even, this chain is not integrally framed. As before, we can write
$$-\dfrac{l}{l_1}=m_2+\dfrac{l_2}{l_1}$$
where $m_2$ is even, $\gcd(l_1, l_2)=1$ and $0<|l_2|<|l_1|$. Since $l_1$ is even, $l_2$ is odd. If $l_2=\pm 1$, $L(t, s)$ is the surgery  on a linear chain of unknots $U_1, U_2, U_3, U_4$ with surgery coefficients $m, m_1, m_2, \mp l_1$  which are all even, and we are done. 

If $|l_s|>1$, we repeat the steps in the preceding paragraph. Note that $m_i$ is always chosen to be even, so every iteration yields a description of $L(t,s)$ as surgery on a linear chain of unknots where all but (perhaps) the final component is an even integer. This process yields a sequence of integers
$$|s|>|l|>|l_1|>|l_2|>\cdots>|l_{2r-1}|>|l_{2r}|>0$$ 
with $l_{2i-1}$ even and $l_{2i}$ odd for all $1\leq i\leq r$. The process terminates when we reach an $r$ such that $|l_{2r}|=1$; this must occur for some $r$ satisfying $2(r+1)\leq |s|.$


Thus we have described the lens space $L(t, s)$ is a surgery along a linear  chain of unknots $$U_1, U_2, \cdots, U_{2r+1}, U_{2r+2}$$ with even surgery coefficients $m, m_1, m_2, \cdots, m_{2r}, \pm l_{2r-1}$ and where $2r+2\le |s|$.  
\end{proof}

\section{Proofs of main results}
\label{sec:proofs}

\noindent
We begin by showing that Dehn surgery number of certain integer homology $S^1\times S^2$s is $2$; this setting is more straightforward than for other homology lens spaces since any description of such a manifold as surgery on a knot in $S^3$ must have surgery coefficient $0$. (Otherwise the surgery coefficient can be any $s/t\in\mathbb{Q}$ so long as $|s|=|H_1(M)|$.) 

\vspace{3mm}
\noindent
\textbf{Proof of Theorem \ref{thm:evenknot} in the case k=0:} We begin with an outline of the argument. To have $k=0$ we must have $p=q=\pm 1$. We will prove the $p=q=1$ case, and  the other is identical. The Dehn surgery number of $S^{3}_{1,1}(L_n)$ is bounded above by 2 by definition.  If $S^{3}_{1,1}(L_n)$ were surgery on some knot $K$, then it would be the boundary of the 0-trace $X_0(K)$ of that knot; note that the 0-trace is spin and has $b_2=1$ and $\sigma=0$. We will show that for both spin structures on $S^{3}_{1,1}(L_n)$ there is a spin 2-handlebody filling $X'_i$ with very large signature rel. second Betti number. Gluing one of $X_i'$ fillings to the putative 0-trace we will get a closed spin manifold which contradicts Theorem \ref{thm:10/8}.

Let us start by building these $X'_i$. Observe that $S^{3}_{1,1}(L_n)$ has two spin structures, with characteristic sublinks $K_1$ and $K_2$, respectively.  Notice that there is a natural symmetry of $S^{3}_{1,1}(L_n)$ exchanging the $K_i$, therefore also exchanging the spin structures. Thus, by modifying the boundary identification, we can use any spin 4-manifold $X_i'$ with boundary $S^{3}_{1,1}(L_n)$ to fill either boundary spin structure. Let us now use $X'$ for one of these spin 4-manifolds. 
To construct an $X'$ we perform the Kirby moves in Figure \ref{fig:torus} to the characteristic sublink $K_1$ in order to transform $K_1$ into the empty link. The first move is the move described in Figure \ref{fig:move}.
Notice that this move requires $n$ odd. The second move is an isotopy (and dropping the red curve from the picture, as it is not in $L'$).  The third is $-1+n^2-1$ positive blow ups, and the final move is one negative blow down. 

\begin{figure}[H]
\centering
\begin{overpic}[width=.6\textwidth, tics=20]
  {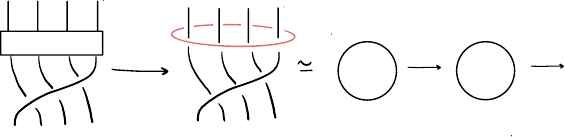}
   \put(50, 0){$1$}
      \put(140, 2){$1-n^2$}
            \put(172, 50){$1-n^2$}
                        \put(240, 50){$-1$}
                                       \put(288, 31){$\emptyset$}
   \put(145, 53){$\color{lred}{-1}$}
      \put(25, 42){$1$}
  \end{overpic}
\caption{Moves in the characteristic link calculus which remove a $1$ framed $T(n,n+1)$ torus knot, for $n$ odd.}
\label{fig:torus}
\end{figure}

The manifold we have obtained from performing these modifications to the $(1,1)$-trace on $L_n$ is our spin filling $X'$. From Figure \ref{fig:torus}, we compute $b_2(X')$ and $\sigma(X')$.
$$b_2(X')=2+1+n^2- 1-1-1=n^2.$$
$$\sigma(X')=1-1+n^2- 1-1+1=n^2-1.$$


If $S^3_{1,1}(L_n)$ were 0-surgery on some knot $K$, then we could glue $-X_0(K)$ and $X'$ along $S^3_{1,1}(L_n)$ (using any gluing map which appropriately pairs the boundary spin structures) to get a closed simply connected spin 4-manifold $Y$.  Since $b_2(X_0(K))=1$ and $\sigma(X_0(K))=0$ we have that
$$b_2(Y)=b_2(X')+b_2(X_0(K_1))=n^2+1, \quad \sigma(Y)=\sigma(X')+\sigma(X_0(K_1))=n^2-1.$$
Applying Theorem \ref{thm:10/8} to the closed spin 4-manifold $Y$, we have
$$4(n^2+1)\geq 5(n^2-1)+8$$
which is a contradiction for all integers $n\geq 2$.  
Thus we conclude that $S^3_{1,1}(L_n)$ cannot be surgery on a knot for any (odd) integer $n\geq 2$.  
\qed
%



\vspace{3mm}

\noindent
\textbf{Proof of Theorem \ref{thm:evenknot}:} Suppose for a contradiction that for any fixed odd integers $p$ and $q$, $S^{3}_{p,q}(L_n)$ is a surgery on a knot $K\subset S^3$. More specifically assume that $S^3_{p,q}(L_n)=S^3_{s/t}(K)$, for some knot $K$, some choice of $s=\pm(pq-1)$, and some natural number $t$ prime to $s$. 
We already proved the case when $s=0$. We now assume $s$ is nonzero.  It is well known that
$$S^{3}_{s/t}(K)\# L(t, s)=S^{3}_{st}(K_{t,s})$$ 
where $K_{t,s}$ denotes the $(t,s)$-cable on $K$, and $L(t,s)$ is the lens space obtained as $S^{3}_{t/s}(U)$ for the unknot $U$, see for example \cite[Proposition 4]{Moser}.  We will use this identification to argue similarly as we did in the proof of  Corollary \ref{coro:even}; here is an outline.

Observe that $S^3_{p,q}(L_n)=S^3_{s/t}(K)$ has two spin structures $\mathfrak{s'_p}$ and $\mathfrak{s'_q}$, which in the characteristic sublink calculus correspond to $K_1$ and $K_2$ respectively, and observe that $L(t, s)$ has a unique spin structure.
For $i\in\{p,q\}$, we will build a spin filling $X_{n,i,s,t}$ of $(S^{3}_{p,q}(L_n)\#L(t,s),\mathfrak{s}_i)$, where $\mathfrak{s}_i$ denote the two spin structures on $S^{3}_{p,q}(L_n)\#L(t,s)$. The Betti number and signature of this filling will have the property that for a fixed $p,q,$ and $i\in\{p,q\}$ and any choice of $s$, $t$ and $K$,
\begin{equation}\label{eq:goodlimit}
\lim_{n\to \infty}\frac{|\sigma(X_{n,i,s,t})|}{b_2(X_{n,i,s,t})}=1.
\end{equation}
We will then glue one of the $X_{n,i,s,t}$ to the trace $-X_{st}(K)$ to obtain a closed spin manifold which violates Theorem \ref{thm:10/8}.

We now discuss how to build a spin filling $X_{n,i,s,t}$ of $(S^{3}_{p,q}(L_n)\#L(t,s),\mathfrak{s}_i)$. By Corollary \ref{coro:lens}, we know that  $L(t, s)$ has a spin filling $V_{t,s}$ with $b_2$ and $|\sigma|$ both bounded above by $|s|$ (so in particular, where both $b_2$ and $\sigma$ are bounded in terms of the $p$ and $q$ we fixed at the outset, and the bounds are independent of the surgery coefficient $\frac{s}{t}$). By arguing as we did in Figure \ref{fig:torus}, for $i\in\{p,q\}$ we obtain spin fillings $X'_{n,i}$ of $(S^3_{p,q}(L_n),\mathfrak{s_i})$ 
which have  $$\lim_{n\to\infty}\frac{|\sigma(X'_{n,i})|}{b_2(X'_{n,i})}=1.$$
Thus, for a fixed $p$ and $q$, and $i\in \{p,q\}$, it is the case that for any knot $K$ and surgery coefficient $\frac{s}{t}$, the boundary connected sum $X_{n,i,s,t}\colon=V_{t,s}\natural X'_{n,i}$ is a spin filling of $(S^3_{p, q}(L_n)\# L(t,s),\mathfrak{s}_i)$ which satisfies equation \ref{eq:goodlimit}.

Since by assumption $ts$ is even, the trace $X_{st}(K_{t,s})$ of $K_{t,s}$ with framing $st$ is a spin filling of $S^{3}_{st}(K_{t,s})$ for one spin structure $\mathfrak{s_i}$ on $S^{3}_{st}(K_{t,s})$, without loss of generality let's assume $i=p$. 
Then we can glue $X_{n,p,s,t}$ to $-X_{st}(K_{t,s})$ to get a closed spin manifold, which can now readily be seen to violate Theorem \ref{thm:10/8} when $n$ is sufficiently large. 
\qed 

\bibliography{bibliography.bib} 
\bibliographystyle{plain}

\Addresses

\end{document}